\newtheorem{thm}{Theorem}
\newtheorem{lem}[thm]{Lemma}
\newtheorem{prop}[thm]{Proposition}
\newtheorem{defn}{Definition}
\newcommand{\N}{{\mathbb{N}}}
\title{Syracuse Maps as Non-singular Power-Bounded Transformations and Their Inverse Maps}
\author{I. Assani, E. Ebbighausen, A. Hande}
\address{Idris Assani, University of North Carolina at Chapel Hill}
\email{assani@email.unc.edu}
\address{Ethan Ebbighausen, University of North Carolina at Chapel Hill}
\email{ejebbigh@email.unc.edu}
\address{Anand Hande, University of North Carolina at Chapel Hill}
\email{a8675309@live.unc.edu}
 \subjclass[2020]{Primary 11B75, Secondary 37A40, 37A44}
\begin{document}

\maketitle
\begin{abstract}

We prove that the dynamical system $(\mathbb{N}, 2^{\mathbb{N}}, T, \mu)$, where $\mu$ is a finite measure equivalent to the counting measure, is power-bounded in $L^1(\mu)$ if and only if there exists one cycle of the map $T$ and for any $x \in \mathbb{N}$, there exists $k \in \mathbb{N}$ such that $T^k(x)$ is in some cycle of the map $T$.  This result has immediate implications for the Collatz Conjecture, and we use it to motivate the study of number theoretic properties of the inverse image $T^{-1}(x)$ for $x \in \mathbb{N}$, where $T$ denotes the Collatz map here.  We study similar properties for the related Syracuse maps, comparing them to the Collatz map.  We also analyze some structural properties of the inverse image in relation to asymptotic density of the set $\{x \in \mathbb{N} \mid \exists k \in \mathbb{N}: T^k(x) < x\}$.    

\end{abstract}

\section{\centering{Introduction}}

Many mathematicians have investigated the Collatz Map through a variety of means from number theory to dynamical systems. For a compendium of prior work on the subject, see \cite{Lagabib}. However, this paper seeks to investigate a new route of analyzing this notorious map by generalizing the method introduced by the first author in \cite{Assanipre} to a larger collection of maps, as well as to elucidate the structure of the pre-image trees of some maps with respect to this process.  Mathematicians have also studied several density results related to the Collatz Map.  Both Terras \cite{Terras1979} and Everett \cite{everett2010} were able to show independently that for almost every $x \in \mathbb{N}$, there exists $k \in \mathbb{N}$ such that $T^k(x) < x$, with Korec proving a stronger asymptotic density result, with $T^n(x) < x^c$ for $c > \log_4(3)$ \cite{Korec1994}.  Tao obtained results pertaining to logarithmic density \cite{TTao}. 
Since this topic deals with the positive integers, we shall consider $\mathbb{N}$ not to contain $0$. First, define the truncated Collatz map:

\begin{defn}
The Collatz map is a function  $T: \mathbb{N} \rightarrow \mathbb{N}$ so \begin{equation}T(x) = \begin{cases} \frac{3x+1}{2} & x\,\, odd \\ \frac{x}{2} & x\,\, even \end{cases} \end{equation}
\end{defn}

The Collatz Conjecture states that $\{1,2\}$ is the only cycle of this map, and that the trajectories of the map are bounded. We focus on the latter of these two requirements, which, put rigorously, says that for any $n \in \mathbb{N}$ there exists $m \in \mathbb{N}$ such that $T^{m}(n)$ is part of a cycle. Next, consider a broader class of maps as presented in \cite{Maw1978}:

\begin{defn}
Let a Syracuse-type map $V: \mathbb{N} \rightarrow \mathbb{N}$ be a function of the form \begin{equation}
    V(x) = \frac{m_ix+r_i}{d} \,\,\,\,\,\, if \,\,\,\, x \equiv i \, \textbf{mod \,d}
\end{equation} where $r_i \equiv - im_i \, \, \textbf{mod} \, \, d$ and $i = 0,1,2 ... d-1$, and $gcd(m_0m_1...m_{d-1},d) = 1$
\end{defn}

While the Collatz Map is often referred to as the Syracuse map, we distinguish these general cases as Syracuse-type maps or simply Syracuse maps for brevity. A simple sub-class of these maps often called $px+1$ maps, which are the same as the Collatz map except with some value $p$ replacing the $3$, has often been the focus of study. It is known that several of these maps have cycles, and that some have multiple cycles (\cite{Lagabib}).

\section{\centering{Ergodic Characterization of the General Syracuse Map}}

\subsection{Extending the Hopf Decomposition:}

\bigskip

While it is not completely necessary to use the Hopf Decomposition for the primary result of the next section, it is helpful in providing motivation. Colloquially, the Hopf decomposition provides that, in a non-singular system, the ambient space may be separated into sets conservative and dissipative with respect to the map (for a full proof and rigorous statement, see \cite{Krengel}). In the case of the Collatz system, any unbounded trajectory must be in the dissipative part, while any cycle must be in the conservative part. In fact, this may be sharpened further, and an extended version of this decomposition is given in \cite{Assanipre} for the Collatz map. It may also be expanded to more general maps on the natural numbers.
\bigskip
\begin{thm}
Consider a non-singular dynamical system $(\mathbb{N}, 2^{\mathbb{N}}, V, \nu)$. There exists a partition of $\mathbb{N}$ into sets $C, D_1, D_2$ such that:
\begin{enumerate}
\item{The restriction of $V$ to $C$ is conservative. The set $C$ is $V$-absorbing, and is the at-most-countable union of cycles $C_i$.}
\item{The set $D_1$ is equal to $\bigcup_{k=1}^{\infty}V^{-k}(C) \backslash C$.}
\item{The set $D_2$ is the complement of $C \cup D_1$ in $\mathbb{N}$, $V^{-1}(D_2) = D_2$.}
\item{Any and all unbounded trajectories of $V$ lie in $D_2$}
\end{enumerate}
\end{thm}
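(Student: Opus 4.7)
The plan is to construct $C$, $D_1$, and $D_2$ directly from the orbit structure of $V$ and then verify each clause in turn. First I would define
\[
C = \{x \in \mathbb{N} : V^k(x) = x \text{ for some } k \geq 1\},
\]
that is, the union of all periodic points of $V$. Since every cycle is a finite subset of $\mathbb{N}$ and $\mathbb{N}$ is countable, $C$ is the union of at most countably many disjoint cycles $\{C_i\}$. The set $C$ is $V$-absorbing because a cycle is, by definition, $V$-invariant: if $V^k(x) = x$ then $V(x)$ satisfies $V^k(V(x)) = V(x)$. Define $D_1 = \bigcup_{k \geq 1} V^{-k}(C) \setminus C$ and $D_2 = \mathbb{N} \setminus (C \cup D_1)$. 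The partition property is immediate from these definitions.

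Next I would verify conservativity of $V$ restricted to $C$. In the non-singular setting, it suffices to show there are no non-null wandering sets in $C$, or equivalently that for every $A \subseteq C$ with $\nu(A) > 0$ there is some $n \geq 1$ with $\nu(A \cap V^{-n}(A)) > 0$. Any such $A$ must intersect some cycle $C_i$ in a non-null subset $A_i$; taking $n$ to be the length of $C_i$ yields $V^{-n}(A_i) \supseteq A_i$, giving the desired recurrence. Property (2) is simply the definition of $D_1$, so only properties (3) and (4) remain.

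For $V^{-1}(D_2) = D_2$, I would argue both containments by contrapositive. If $V(x) \in C \cup D_1$ then $V^k(V(x)) \in C$ for some $k \geq 0$, so $V^{k+1}(x) \in C$; combined with $x \notin C$ this places $x \in D_1$, not in $D_2$. Conversely, if $x \in C$ then $V(x) \in C$ by absorbing, and if $x \in D_1$ with $V^k(x) \in C$ for minimal $k \geq 1$ then $V^{k-1}(V(x)) \in C$, so $V(x) \in C \cup D_1$. Finally, for (4), the orbit of any $x \in C$ is a finite cycle and hence bounded, while the orbit of any $x \in D_1$ eventually enters $C$ and then cycles forever, so any unbounded trajectory must originate in $D_2$. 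The only mild subtlety I anticipate is the bookkeeping in the two containments of clause (3), together with being careful that ``conservative'' is interpreted in the non-singular (Krengel) sense of absence of non-null wandering sets rather than the stronger measure-preserving sense, since $\nu$ need not be $V$-invariant.
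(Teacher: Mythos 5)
Your proof is correct, and it diverges from the paper's on the one point where the two arguments are genuinely different, namely item (1). The paper invokes the Hopf decomposition to obtain a conservative, $V$-absorbing set $C$ and then asserts that $C$ is a union of cycles, justifying only the \emph{countability} of that union; you instead define $C$ outright as the set of periodic points and verify conservativity by hand (a positive-measure $A \subseteq C$ meets some cycle $C_i$ in a positive-measure $A_i$ by countable subadditivity, and $A_i \subseteq V^{-n}(A_i)$ for $n$ the period of $C_i$). Your version is more elementary and self-contained, and it supplies the step the paper leaves implicit: to see that the Hopf conservative part consists of periodic points one must apply conservativity to singletons of positive measure, which is essentially your recurrence argument run in reverse. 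What the paper's route buys in exchange is the identification of $C$ with the maximal conservative set of the Hopf decomposition, so that $D_1 \cup D_2$ is automatically the dissipative part --- this matches the authors' motivation, but the theorem as stated does not require it. Your treatment of items (2)--(4), i.e.\ the computation $V^{-1}(C \cup D_1) = C \cup D_1$ and the observation that orbits starting in $C \cup D_1$ are eventually periodic and hence bounded, coincides with the paper's. One small caution: your $C$ and the Hopf conservative part may differ by a $\nu$-null set when $\nu$ has null atoms, but since the theorem only asserts the existence of \emph{some} partition with the listed properties, your construction is fully adequate; you are also right to flag that ``conservative'' must be read in the non-singular sense, which is the sense the paper uses.
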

\bigskip

\begin{proof}: The Hopf Decomposition provides a partition of $\mathbb{N}$ into sets $C$ and $D$, so that the restriction of $V$ to $C$ is conservative and $C$ is $V$-absorbing. Since $\mathbb{N}$ is countable, $C$ also is, so it can contain at most countable many cycles. This proves (1). Let $D_1 = \bigcup_{k=1}^{\infty} V^{-k}(C)\backslash C$. Let $D_2 = \mathbb{N} \backslash [C\cup D_1]$. Then, $V^{-1}(C\cup D_1) = V^{-1}(C) \cup \bigcup_{k=1}^{\infty} V^{-k-1}(C)\backslash C = C \cup D_1$. Therefore, $V^{-1}(D_2) = D_2$, and (2) and (3) are proven. Finally, given any $x \in \mathbb{N}$, a bounded trajectory means that there exists $n$ so $V^{n}(x) \in C$, so either $x \in C$ or $x \in D_1$. Given an unbounded trajectory of the point $y$, the set $\{y\}$ is wandering, and $V^{m}(y) \notin C$ for any natural number $m$. Thus, $y \in D_2$ and (4) is proven.
\end{proof}

\bigskip

This characterization sets aside the unbounded trajectories of a map in a set $D_2$. Showing that this set is empty would prove half of the Collatz conjecture. This leads to the following characterization that is equivalent to such a case. Furthermore, showing that $C$ is exactly $\{1,2\}$ would prove the entire conjecture.

\subsection{\centering{Dynamical System Characterization:}}

Proving that $D_2$ is empty is essentially the same problem as proving the trajectories bounded, thus it is important to introduce some equivalent criteria for such a case. To do so, consider the following definition.

\begin{defn}: Let $(X, \mathscr{A}, T, \mu)$ be a non-singular dynamical system. It is said to be power bounded in $L^{1}(\mu)$ (or often simply just power bounded) if there exists some $M \in \mathbb{R}_{+}$ such that for all sets $A \in \mathscr{A}$ and natural numbers $n$, $\mu(T^{-n}(A)) \leq M\mu(A)$. \end{defn}
\bigskip

Using this structure from the study of dynamics, the first author presented the following characterization of the Collatz map:
\bigskip

\begin{thm}
Let $(\mathbb{N}, 2^{\mathbb{N}}, T, \mu)$ be the Collatz dynamical system with the counting measure $\mu$. The following are equivalent:
\begin{enumerate}
\item{There exists a finite measure $\alpha$ equivalent to $\mu$ for which the dynamical system $(\mathbb{N}, 2^{\mathbb{N}}, T, \alpha)$ is power bounded in $L^{1}(\alpha)$.}
\item{The set $D_2$ is empty.}
\item{The trajectories of each point $n \in \mathbb{N}$ are bounded.}
\end{enumerate}
\end{thm}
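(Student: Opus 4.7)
The plan is to establish the three implications (2)$\Leftrightarrow$(3), (1)$\Rightarrow$(3), and (3)$\Rightarrow$(1) separately. The first equivalence is nearly automatic from Theorem 1, while the substantive content lies in the other two directions: one argues by contradiction on tails of an unbounded orbit, and the other constructs an explicit reweighted measure.

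The equivalence (2)$\Leftrightarrow$(3) drops out of Theorem 1. Part (4) gives (2)$\Rightarrow$(3) at once. For the converse, part (3) of Theorem 1 says $T^{-1}(D_2)=D_2$, which implies $D_2$ is forward $T$-invariant as a set; so if $y\in D_2$ has a bounded orbit the orbit must eventually enter a cycle by pigeonhole, but every cycle lies in $C$ by part (1) and $C\cap D_2=\emptyset$, contradicting $y\in D_2$.

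For (1)$\Rightarrow$(3) I argue by contraposition. Suppose there is an unbounded orbit $n_0,n_1,n_2,\ldots$; its terms are necessarily distinct. Define the tails $A_N=\{n_k:k\geq N\}$. These decrease to the empty set, so by continuity of the finite measure $\alpha$, $\alpha(A_N)\to 0$. But $T^N(n_0)=n_N\in A_N$ means $n_0\in T^{-N}(A_N)$, so $\alpha(T^{-N}(A_N))\geq \alpha(\{n_0\})>0$ (since $\alpha$ is equivalent to the counting measure). This contradicts $\alpha(T^{-N}(A_N))\leq M\alpha(A_N)$.

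For (3)$\Rightarrow$(1) I construct $\alpha$ by hand, and this is where the main difficulty lies. Using (2), decompose $\mathbb{N}=\bigsqcup_i B_i$ into basins of the cycles $C_i\subseteq C$, and for each $x$ let $i(x)$ be its basin index and $h(x)=\min\{n\geq 0:T^n(x)\in C\}$ its entry time. Pick weights $w_i>0$ with $\sum_i w_i|C_i|<\infty$ and a parameter $r\in(0,1/2)$, and set $\alpha(\{x\})=w_{i(x)}r^{h(x)}$. Since $|T^{-1}(y)|\leq 2$ for the Collatz map one has $|T^{-k}(C_i)|\leq |C_i|\,2^k$, so $\alpha$ is a finite measure obviously equivalent to $\mu$. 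Power boundedness is checked on singletons: for $y\in C$ a geometric-series bound shows $\alpha(T^{-n}(\{y\}))\leq(1-2r)^{-1}\alpha(\{y\})$, while for $y\in D_1$ every $x\in T^{-n}(\{y\})$ satisfies $h(x)=n+h(y)$, giving $\alpha(T^{-n}(\{y\}))\leq(2r)^n\alpha(\{y\})$; summing over $y\in A$ yields the bound with $M=(1-2r)^{-1}$. The subtlety here is twofold: the exponent $r<1/2$ is forced by the binary branching of the preimage tree, and the weights $w_i$ must be tuned to accommodate the a priori possibility that $C$ contains infinitely many cycles.
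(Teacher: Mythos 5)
Your proof is correct, but your construction for (3)$\Rightarrow$(1) takes a genuinely different route from the one the paper uses (the paper states this theorem citing prior work and proves the generalized version, Theorem 3, whose argument is the relevant comparison). For (1)$\Rightarrow$(3) the two arguments are essentially the same: both exploit that a finite measure forces $\alpha(T^k(x))\to 0$ along a non-repeating orbit while $x\in T^{-N}(\{T^N(x)\})$ keeps the preimage measure bounded below; you phrase it with tails $A_N$ and continuity from above, the paper works directly with the singletons $\{T^N(x)\}$. The real divergence is in the construction of $\alpha$. The paper builds the measure level by level on the preimage tree of each cycle, enumerating the (possibly countably infinite) preimage set of each node and assigning dyadic weights $2^{-i-3}$, $2^{(-j-2)+(-i-2)}$, etc., so that each successive preimage layer carries at most half the mass of the previous one; this works for an arbitrary map $\mathbb{N}\to\mathbb{N}$ with no bound on branching, which is what Theorem 3 requires. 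You instead use the Collatz-specific fact that $|T^{-1}(y)|\leq 2$ to define the clean closed form $\alpha(\{x\})=w_{i(x)}r^{h(x)}$ depending only on the entry time $h(x)$, which buys an explicit power bound $M=(1-2r)^{-1}$ and a much shorter verification, but would fail for a map with unbounded branching (a point with infinitely many preimages would already make $\alpha$ infinite). One further difference worth noting: you spell out (2)$\Leftrightarrow$(3) from Theorem 1, including the observation that a bounded orbit in $\mathbb{N}$ is eventually periodic and that cycles lie in the conservative part $C$; the paper relegates this equivalence to a remark. Both approaches are sound; yours is tailored to the Collatz map, the paper's is built for generality.
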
 
\bigskip

This characterization may be extended to nearly all maps. Of course, the Collatz map and other similar maps have cycles that define the conjecture surrounding them. It is necessary for these maps to have at least one cycle to use such a characterization, because otherwise the set $D_2$ cannot be empty in any case. Given this caveat, the next theorem follows.
\bigskip

\begin{thm}Let $(\mathbb{N}, 2^{\mathbb{N}}, V, \mu)$ be a dynamical system where $\mu$ is a finite measure equivalent to the counting measure. Then, it is power-bounded in $L^1(\mu)$ if and only if there exists at least one cycle of the map $V$ and for any $x \in \mathbb{N}$, there exists a non-negative integer $k$ such that $V^{k}(x)$ is in some cycle of the map $V$. 
\end{thm}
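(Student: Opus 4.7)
The plan is to reduce power-boundedness to the pointwise bound $\sum_{z \in T^{-n}(\{y\})} w_z \leq M w_y$ on the atom weights $w_x := \mu(\{x\})$, valid for all $y \in \mathbb{N}$ and $n \geq 0$; this is equivalent to $\mu(T^{-n}(A)) \leq M\mu(A)$ because every subset of $\mathbb{N}$ is a disjoint union of singletons.

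For the forward direction I would argue by contradiction. Suppose the system is power-bounded yet some $x_0$ has a trajectory that never enters a cycle. Then the iterates $T^n(x_0)$, $n \geq 0$, are pairwise distinct, and since $x_0 \in T^{-n}(\{T^n(x_0)\})$ the pointwise inequality yields $w_{T^n(x_0)} \geq w_{x_0}/M$ for every $n$. Because $\mu$ is equivalent to the counting measure, $w_{x_0} > 0$, so summing over $n$ gives $\mu(\mathbb{N}) = \infty$, contradicting finiteness of $\mu$. Hence every trajectory eventually enters a cycle, which in turn forces the existence of at least one cycle.

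For the backward direction I would construct $\mu$ explicitly using Theorem 1. Write $\mathbb{N} = C \cup D_1$ with $C = \bigsqcup_i C_i$ the union of the at most countably many cycles, and for $z \in D_1$ let $\tau(z) = \min\{k \geq 1 : T^k(z) \in C\}$. Fix positive $(a_i)$ with $\sum_i a_i < \infty$ and set $w_c := a_i/|C_i|$ for $c \in C_i$. Define $u$ on $\mathbb{N}$ by $u(c) := w_c$ on $C$ and, for each $y$, by $u(z_j) := 2^{-j} u(y)$ where $z_1, z_2, \ldots$ enumerates the non-cycle preimages of $y$. Finally, fix $\epsilon \in (0,1)$ and set $w_z := \epsilon^{\tau(z)} u(z)$ for $z \in D_1$. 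The one-step inequality $\sum_j u(z_j) \leq u(y)$ telescopes to $\sum_{\tau(z)=k} u(z) \leq \sum_i a_i$ for every $k$, so $\mu(\mathbb{N}) \leq (\sum_i a_i)/(1-\epsilon) < \infty$ while $w_x > 0$ everywhere. The pointwise PB inequality is then verified by cases: for $y \in D_1$ one gets the sharp estimate $\epsilon^n w_y \leq w_y$; for $y \in C_i$, a bijection between (cycle-point, number-of-laps) pairs and depths $m \in \{1,\ldots,n\}$ collapses the non-cycle contribution to $(a_i/|C_i|) \sum_{m=1}^{n} \epsilon^m \leq w_y \cdot \epsilon/(1-\epsilon)$, producing the uniform ratio $M = 1/(1-\epsilon)$.

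The main obstacle is this last uniform estimate. Naively the $|C_i|$ trees rooted at the elements of a cycle $C_i$ contribute independently to the preimage mass at a cycle point, threatening a bound that grows with the cycle length and precluding a single $\epsilon$ from working across all basins; the saving observation is that these contributions reorganise, via the cycle-point/lap bijection, into a single geometric progression in $\epsilon$ indexed by depth, giving a constant that is independent of the cycle lengths.
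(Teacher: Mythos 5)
Your proposal is correct, and its overall skeleton matches the paper's: the forward direction is the same finiteness contradiction (a trajectory that never meets a cycle has pairwise distinct iterates, each forced by power-boundedness to carry mass at least $w_{x_0}/M$, so $\mu(\mathbb{N})=\infty$), and the backward direction builds an explicit finite equivalent measure with geometrically decaying mass along the preimage trees. Where you genuinely diverge is in the organization of that measure and in the verification at cycle points, and your version is the stronger one. The paper builds one measure $\mu_i$ per cycle with level-$k$ preimage mass at most $2^{-k-1}$ and then asserts $\mu_i(T^{-n}(B))\leq 2^{-n}\mu_i(B\cap C_i^{c})+2\mu_i(B\cap C_i)$; the factor $2$ is clear when $B$ contains the whole cycle (total off-cycle mass $\leq \mu_i(C_i)$), but for a single point $c$ of a cycle of length $N$ one has $\mu_i(\{c\})=1/(2N)$ while the off-cycle preimages of $c$ alone may carry mass on the order of $1/8$ independent of $N$, so the claimed uniform constant is not actually justified for subsets of long cycles. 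Your construction is built precisely to avoid this: giving each point of $C_i$ mass $a_i/|C_i|$, propagating the sub-stochastic weight $u$ down the trees, and damping by $\epsilon^{\tau}$ means that the off-cycle contribution to $\mu(T^{-n}(\{y\}))$ at depth $m$ is at most $\epsilon^m a_i/|C_i| = \epsilon^m w_y$ (exactly one cycle point feeds each depth), so the whole preimage collapses to a geometric series and $M=1/(1-\epsilon)$ uniformly over all cycles and cycle lengths. In short, your route is not merely a reformulation: the hitting-time/depth bookkeeping supplies the uniform constant that the paper's verification glosses over, at the cost of a slightly heavier setup (the function $\tau$ and the two-stage definition $w=\epsilon^{\tau}u$).
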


\begin{proof}
$\Rightarrow$) Consider the power-bounded system $(\mathbb{N}, 2^{\mathbb{N}}, V, \mu)$. By contradiction, let there exist some $x$ so $V^{k}(x)$ is never in a cycle. Then, for all $k \neq l$, $k,l > 0$, $V^{k}(x) \neq V^{l}(x)$. Let $\mu(x) = \delta > 0$.  Further, since the measure is finite, $\mu(\bigcup_{i=0}^{\infty} V^{i}(x)) = \sum_{i=1}^{\infty} \mu(V^{i}(x)) = \epsilon > 0$, implying that $\mu(V^{k}(x)) \rightarrow 0$ as $k \rightarrow \infty$. Given any $M \in \mathbb{R}_{+}$, we may take some large $N$ such that $\mu(V^{N}(x)) < \delta/M$. Hence, $V^{-N}(V^{N}(x)) > \delta > M\mu(V^{N}(x))$, contradicting that the system is power bounded since $M$ was arbitrary.

\medskip

$\Leftarrow$) Let this property hold. Since the space is countable, there are at most countable many cycles, and every point is in a pre-image of a cycle. Let the cycles be the sets $C_1, C_2, ...$. First consider $C_1$. The cycle must be of finite length, N, so we construct a measure on $\mathbb{N}$. Define a function $\mu_1$, which will have a measure value at each specified point, and set the measure of any $A \in 2^{\mathbb{N}}$ to be the sum of the measures of the points in $A$. Let $\mu_1(c) = \frac{1}{2N}$ for any $c \in C_1$, and let  so that $\mu_1(C_1) = \frac{1}{2}$. Next, there are at most countably many points in $V^{-1}(C_1) \backslash C_1$. The infinite case implies the finite case, so we consider this case. Enumerate these points as $\{c_{1}, c_{2}, c_{3} ... \}$. Set $\mu_1(c_{i}) = 2^{-i-3}$, so that $\mu_1(V^{-1}(C_1) \backslash C_1) \leq 2^{-2}$.  Next, consider $V^{-1}(c_j)$, which does not contain $c_j$ nor any other point with a defined measure since such a case would generate a cycle. It again may be enumerated as $\{c_{j,1}, c_{j,2}, c_{j,3} ... \}$, since the set is at most countably infinite. Set $\mu_1(c_{j,i}) = 2^{(-j-2)+(-i-2)}$ so that $\mu_i(V^{-1}(c_j)) \leq 2^{-j-3}$ and so $\mu_1(V^{-1}(V^{-1}(C_1)\backslash C_1)) = 2^{-3}$. Repeat inductively over the pre-images generated by this set, and set all points not in $\bigcup_{i=0}^{\infty}V^{-i}(C_1)$ to have measure zero. 
\bigskip

The process above gives that $\mu_1(\bigcup_{i=0}^{\infty}V^{-i}(C_1)) = \mu_1(C_1) + \mu_1(V^{-1}(C_1) \backslash C_1) + \mu_1(V^{-1}(V^{-1}(C_1) \backslash C_1)) + ... \leq \frac{1}{2} + \frac{1}{4} + \frac{1}{8} + ... = 1$. Repeat this inductively again over the $C_i$, and we may generate a new measure by the countable sum of measures. Let $\mu = \sum_{i=1}^{\infty} 2^{-i-1}\mu_i$, so $\mu(\mathbb{N}) \leq 1$ and $\mu$ is a finite measure. Further, every point in $\mathbb{N}$ is in one of the $\bigcup_{i=0}^{\infty}V^{-i}(C_j)$, so every point has nonzero measure under $\mu$. Next, we need to show that the measure allows our point transformation to be power bounded. By construction, $\mu_i(V^{-1}(A)) \leq \mu_i(A)$ for any set $A$ such that $A \cap C_i = \emptyset$. For a set $B$ intersecting the cycle $C_i$, separating $B = (B \cap C_{i}^{C}) \cup (B\cap C_i)$ gives $\mu_i(V^{-n}(B)) \leq 2^{-n}\mu_i(B \cap C_{i}^{C}) + 2 \mu_i(B \cap C_i) \leq 2\mu_i(B)$ for all $n \in \mathbb{N}$. Therefore, $\mu(V^{-n}(A)) = \sum_{i=1}^{\infty} 2^{-i-1}\mu_i(V^{-n}(A)) \leq \sum_{i=1}^{\infty} 2^{-i}\mu_i(A) = 2\mu(A)$. The measure $\mu$ is finite and power-bounded in $\mathscr{L}^{1}$, as well as equivalent to the counting measure, as desired.

\end{proof}

\bigskip

\textbf{Remark 1}: Given the previous result, it is helpful to note which Syracuse maps have cycles and thus can use this characterization.  For example, consider a map \begin{equation} T(x) = \begin{cases} \frac{px+1}{2} & x \,\, odd \\ \frac{x}{2} & x \,\, even
\end{cases}  \end{equation} where $p$ is an odd number. It is easy to see that $T(1) = \frac{p+1}{2}$, and if $p = 2^{k}-1$, there exists a cycle $\{1, 2^{k-1}, 2^{k-2}, ... 2\}$. The Collatz map falls into this category. It is also known that cycles exist in the case of $5n+1$, and $181n+1$ \cite{10.2307/2006353}.

\section{\centering{Chain Structure in The Collatz Map}}

\indent Looking at the Collatz and generalized Collatz maps in terms of a power-bounded system as above leads naturally to examining the inverse map. The action of the Collatz map is rather erratic when looking in the forward direction, but in the backward direction, some patterns begin to arise, specifically when classifying values as nodes modulo 3. However, these patterns don't necessarily extend to general maps, allowing for classification of these maps.

\subsection{\centering{The Collatz Map:}}

While the map may seem unpredictable at first, looking at values modulo 3 presents some regularity. A simple check shows that $T^{-1}(3p) = \{6p\}$, $T^{-1}(3p+1) = \{6p+2\}$, and $T^{-1}(3p+2) = \{2p+1, 6p+4\}$. For shorthand, we designate three sets.

\begin{defn}
A value $n \in \mathbb{N}_i$ is said to be in $\mathbb{N}_i$ if $n \equiv i \,\, (mod \,\, 3)$. 
\end{defn}

The pre-image of a node in $\N_0$ is again in $\N_0$, the pre-image of a node in $\N_1$ is in $\N_2$, and the pre-image of a node in $\N_2$ includes one node either in $\N_1$ or $\N_0$ and another in $\N_1$. The latter behavior is the most interesting, and may be characterized in a more lucid way, using a simple lemma.

\begin{lem}
Any node in $\N_2$ may be written as $3^a2^bh-1$ where $a>0$, $b \geq 0$, and $h \geq 1$ is not a multiple of $2$ or $3$. 
\end{lem}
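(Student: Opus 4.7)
The plan is to view the lemma as a direct consequence of the fundamental theorem of arithmetic applied to $n+1$. The key observation is that the condition $n \equiv 2 \pmod 3$ is equivalent to $n + 1 \equiv 0 \pmod 3$, so $n+1$ is a positive integer divisible by $3$. Once this is in hand, the factorization in the claim is simply a re-packaging of the prime factorization.

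More explicitly, I would take $n \in \mathbb{N}_2$ and set $m = n + 1$, which is a positive integer with $3 \mid m$. Define $a$ to be the $3$-adic valuation of $m$ and $b$ to be the $2$-adic valuation of $m$, and let
\begin{equation}
h = \frac{m}{3^a 2^b}.
\end{equation}
By construction $a \geq 1$ (since $3 \mid m$) and $b \geq 0$, while $h$ is a positive integer with neither $2$ nor $3$ dividing it (otherwise the valuations $a$ or $b$ would have been strictly larger). Rearranging gives $n = 3^a 2^b h - 1$, which is the desired form.

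I would then briefly verify the converse direction, i.e.\ that any number of the form $3^a 2^b h - 1$ with $a \geq 1$ indeed lies in $\mathbb{N}_2$: modulo $3$, $3^a 2^b h \equiv 0$, so $3^a 2^b h - 1 \equiv -1 \equiv 2 \pmod 3$. This confirms that the representation characterizes $\mathbb{N}_2$ exactly rather than giving just a one-sided inclusion.

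There is no real obstacle here; the only point requiring a word of care is noting that $a$ must be strictly positive (which uses the hypothesis $n \equiv 2 \pmod 3$ rather than merely $n \in \mathbb{N}$), and that the coprimality of $h$ with $6$ follows from having extracted the \emph{full} $2$-part and $3$-part of $n+1$. The lemma is essentially bookkeeping and its role is to set up the structural description of the inverse-image tree that the following section will exploit.
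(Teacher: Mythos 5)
Your argument is correct and is essentially the paper's own proof: both rewrite $n \equiv 2 \pmod 3$ as $n+1$ being a positive multiple of $3$ and then invoke the Fundamental Theorem of Arithmetic to extract the $2$-part and $3$-part, leaving a cofactor $h$ coprime to $6$. The added verification of the converse inclusion is a harmless (and mildly useful) supplement, not a different method.
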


\begin{proof}
For arbitrary $n \in \N_2$, it may be written as $3p+2$ be definition, and thus as $3(p+1)-1$. Using the Fundamental Theorem of Arithmetic, $p+1$ has the desired representation $3^{a-1}2^{b}h$. 
\end{proof}

\bigskip

\indent Under this representation, $T^{-1}(3^{a}2^{b}h-1) = \{3^{a-1}2^{b+1}h-1, 2(3^{a}2^{b}h-1)\}$. In particular, we may characterize a family of nodes by either the image or part of the pre-image, where $T(2^{a}h-1) = 3(2^{a-1})h-1$, $T(3(2^{a-1})h-1) = 3^{2}2^{a-2}h-1$, and this repeats until $T^{a}(2^{a}h-1) = 3^{a}h-1$. Define this as a family structure, when such a repeating form occurs. 

\begin{figure}[H]

\centering{\includegraphics[width=4in, height= 2.5in]{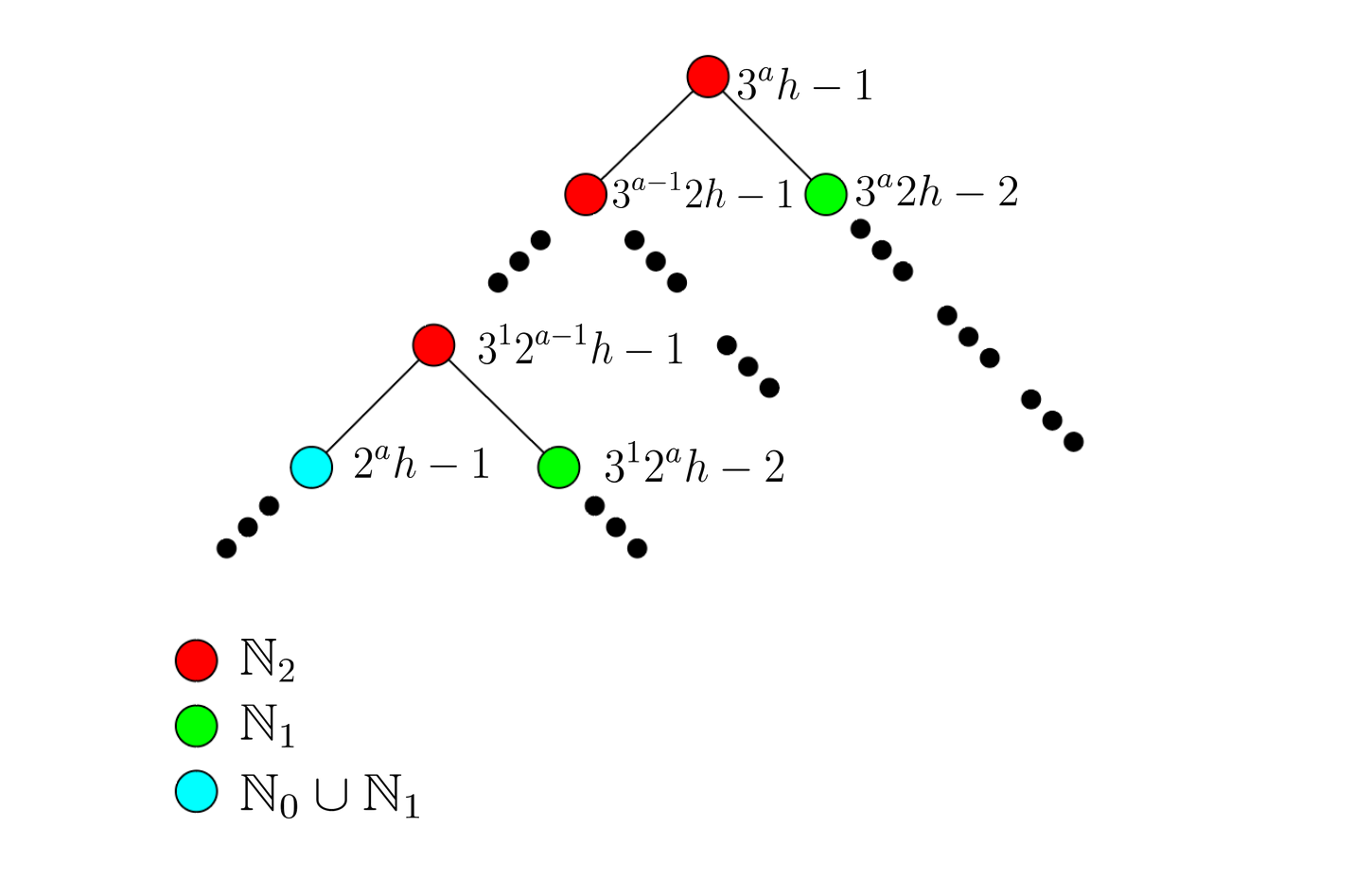}}
\captionsetup{justification=centering}
\caption{Inverse Image Tree generated by $3^ah-1$ up to the $a^{th}$ level}
\end{figure}

In this final case, the node is even and its image is a $\N_1$ node. The pre-image of the $\N_1$ node is again a $\N_2$ node, starting the process again. Connecting these families forms a chain. This chain may extend in the other direction as well. In the case that $2^{a}h-1$ is an $\mathbb{N}_1$ node, its pre-image $2^{a+1}h-2$ is in $\mathbb{N}_2$, and so it takes the form $3^{r}k-1$ following the conditions of the lemma as well. It has been useful to distinguish the nodes of the form $3^{r}k-1$ as ``chain head nodes," although they are on equal footing of importance with the $\mathbb{N}_1$ nodes of the form $2^{a}h-1$. 
\bigskip

\begin{figure}[h]

\centering{\includegraphics[width=4.25in, height= 2.5in]{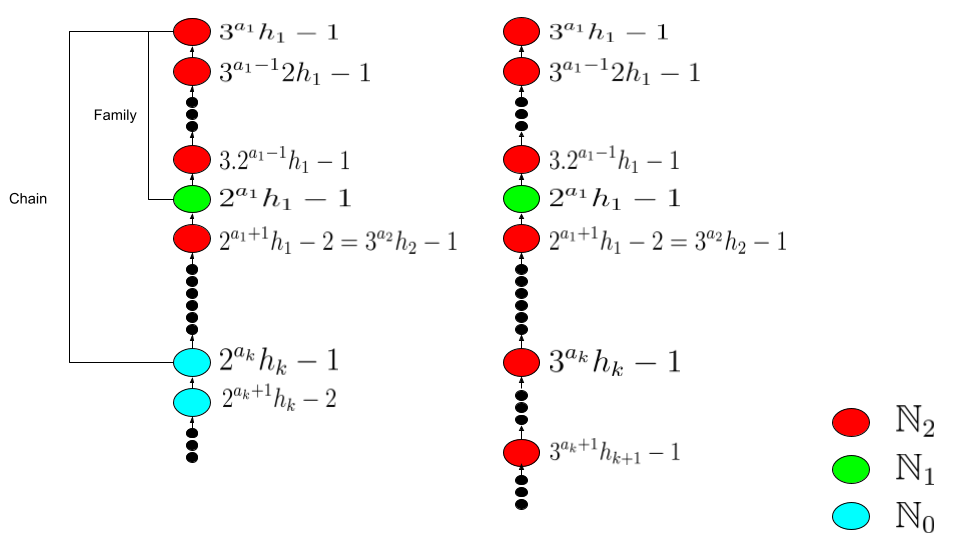}}
\captionsetup{justification=centering}
\caption{A Family and a Chain Illustrated with Nodes}
\end{figure}

\bigskip

\subsection{More General Maps:}

Many of the more general maps with similar forms to the Collatz Map have extremely different structures and properties, for example, that many currently are not known to have a cycle. Consider a sub-class of these maps, where \begin{equation} V(x) = \begin{cases} \frac{px+r}{2} & x \,\, odd \\ \frac{x}{2} & x \,\, even
\end{cases}  \end{equation} where $p$ is an odd integer and $r$ is taken to be an integer so $|r| < p$ and $r$ is relatively prime to $p$. In this case, the idea of a family structure includes some repeating form over a number of nodes. For example, if $T(p^{\alpha}2^{\beta}k-1) = p^{\alpha+1}2^{\beta -1}k-1$. We then have the following.

\begin{thm}
A map $V: \mathbb{N} \rightarrow \mathbb{N}$ has a chain structure precisely when $r = p-2$ or $r = 2-p$.
\end{thm}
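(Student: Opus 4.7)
The plan is to unravel what a chain structure requires algebraically, then check which $r$ satisfy the resulting constraint. The key observation is that for any odd $x$, $V(x) = (px+r)/2 \equiv r\cdot 2^{-1} \pmod p$ (since $p$ is odd, $2$ is invertible modulo $p$), so the nodes with any odd pre-image form a single residue class $s \equiv r\cdot 2^{-1} \pmod p$. Writing $n = pk+s$ for such a node, the candidate pre-image is $x = (2n-r)/p = 2k + q$ where $q = (2s-r)/p \in \mathbb{Z}$, and a short parity check shows that $x$ is odd iff $r$ is odd, which already rules out every even value of $r$.

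Assuming $r$ is odd, I would next establish the analogue of Lemma 1 for the general map: every $n \equiv s \pmod p$ factors uniquely as $n = p^a 2^b h + s$ with $a\geq 1$, $b\geq 0$, and $\gcd(h, 2p)=1$. The chain structure of Section 3.1 then translates into the statement that the odd pre-image of $p^a 2^b h + s$ is $p^{a-1} 2^{b+1} h + s$, so that iterating trades a factor of $p$ for a factor of $2$ until $a=0$, at which point the chain transitions through the $\mathbb{N}_1$-analogue nodes exactly as in the Collatz case. Substituting the canonical form into $x = 2k+q$ gives $x = p^{a-1} 2^{b+1} h + q$, so the chain property is equivalent to $q = s$, i.e.\ $r = s(2-p)$.

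Combining $r = s(2-p)$ with $|r| < p$ and $\gcd(r,p)=1$, and taking $s$ as its minimal absolute value representative modulo $p$, the identity $|r| = |s|(p-2)$ together with $|r| < p$ forces $|s| < p/(p-2)$, which gives $|s| \leq 1$ for every odd $p \geq 3$. Since $s \not\equiv 0 \pmod p$, only $s = \pm 1$ survive, producing $r = p-2$ (with $s = -1$ and chain nodes $p^a 2^b h - 1$) and $r = 2-p$ (with $s = 1$ and chain nodes $p^a 2^b h + 1$). The converse is immediate: for either of these two values of $r$, substituting into $x = (2n-r)/p$ with $n = p^a 2^b h + s$ yields $x = p^{a-1} 2^{b+1} h + s$ by direct computation, and the remaining chain behavior (families, transitions through $\mathbb{N}_1$-type nodes) follows exactly as in Section 3.1.

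The main obstacle is not the algebra itself, which is routine, but fixing the precise meaning of ``chain structure'' for a general Syracuse map so that the biconditional becomes sharp. Once one adopts the formulation above --- that the odd-pre-image operation preserves the canonical form $p^a 2^b h + s$ --- the equivalence collapses to the single equation $q = s$, and the classification follows from the constraint $|r| < p$.
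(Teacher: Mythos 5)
Your proposal is correct and follows essentially the same route as the paper: both reduce ``chain structure'' to the requirement that the odd pre-image of $p^{a}2^{b}h+s$ again have the form $p^{a-1}2^{b+1}h+s$, derive the integrality condition $r=s(2-p)$ (the paper's $r=(p-2)l$ with $l=-s$), and use $|r|<p$ to force $s=\pm 1$. Your opening observation that $V(x)\equiv r\cdot 2^{-1}\pmod{p}$ for every odd $x$ is precisely the fact the paper verifies at the end (its congruence reducing to a triviality) to show that families always link into chains, so the two arguments coincide in substance.
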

\begin{proof}
This relies on a couple building blocks of concepts that led to the chain structure in the Collatz map. First, the family structure gives a node of the form $p^{\alpha}2^{\beta}k-l$ (where $k$ is neither a multiple of $2$ or $p$) so that $V(p^{\alpha}2^{\beta}k-l) = p^{\alpha+1}2^{\beta-1}k-l$. This requires that $l$ be odd, and so \begin{equation} \frac{p(p^{\alpha}2^{\beta}k-l)+r}{2} = p^{\alpha+1}2^{\beta-1}k-l\end{equation} Hence, $\frac{r-pl}{2} = -l$ and $r = (p-2)l$. Since $l$ is an integer, $\frac{r}{p-2}$ is, and since $|r| < p$, $r = \pm (p-2)$. Note that these calculations also show that elements in the class of $\frac{r}{2-p} \,\,(mod \,\, p)$ have two pre-images. These are, in fact, the only nodes with $2$ pre-images. 

This case where $r = \pm(p-2)$ gives a further sub-class of maps with the family structure. Next, we most consider the connection of families to create chains. Using the previous notation, let $m$ be the largest positive integer so $2^m$ divides $p^{\alpha+\beta}k + \frac{r}{2-p}$. We would require that \begin{equation} V^{m+1}(p^{\alpha+\beta}k+\frac{r}{2-p}) \equiv \frac{r}{2-p} \,(mod\,\,p)  \end{equation} Such a condition would connect two families, since $V^{m+1}(p^{\alpha+\beta}k+\frac{r}{2-p})$ would then have a representation $p^{\alpha_2}2^{\beta_2}k_2-\frac{r}{p-2}$.

The above equation says that \begin{equation}\frac{p(\frac{p^{\alpha+\beta}k-\frac{r}{p-2}}{2^m})+r}{2} \equiv \frac{r}{2-p} \,(mod \,\,p) \end{equation} Since $p$ is odd, this occurs exactly when \begin{equation}(2-p)(p^{\alpha+\beta+1}k-\frac{pr}{p-2}+2^{m}r) \equiv 2^{m+1}r \,(mod\,\,p) \end{equation} which reduces to the trivial \begin{equation}2^{m+1}r \equiv 2^{m+1}r \,(mod\,\,p) \end{equation} We thus have that the families connect for this form of map whenever they occur. \end{proof}
\bigskip

\textbf{Remark 2:} We may extend further to the case of \begin{equation}
    V(x) = \frac{m_ix+r_i}{d} \,\,\,\,\,\, if \,\,\,\, x \equiv i \, \textbf{mod \,d}
\end{equation} where $r_i \equiv im_i \, \, \textbf{mod} \, \, d$ and $i = 0,1,2 ... d-1$, and $gcd(m_0m_1...m_{d-1},d) = 1$, as proposed in \cite{Maw1978} and \cite{MaB1990}. In this case, since $r_i = im_i$, when $\frac{r_i}{m_i-d}$ is an integer, $T(m_{i}^{\alpha}d^{\beta}k-\frac{r_i}{m_i-d}) = m_{i}^{\alpha+1}d^{\beta-1}k-\frac{r_i}{m_i-d}$. These families may occur in one such class modulo $d$, or in several depending on this condition. Future results using the chain structure on the Collatz map may be generalized to such a class of maps.
\bigskip

\section{The Set $D_2$ and Previous Density Results}

The properties of the set $L = \{x \in \mathbb{N} \mid \exists k \in \mathbb{N}: T^k(x) < x\} \subset \mathbb{N}$ are of interest, as the Collatz Conjecture is equivalent to the statement $L = \mathbb{N} - \{1\}$. The set $L$ was introduced by R. Terras in \cite{Terras1976}, where it was also shown to have density 1. The works of Korec generalized this set to $M_c = \{x \in \mathbb{N} \mid \exists k \in \mathbb{N}: T^k(x) < x^c\}$ for $c > log_3 (4)$ in \cite{Korec1994}, which also has density 1, and further generalizations follow in Lagarias \cite{LagDen}, Everett \cite{everett2010}, and Tao \cite{TTao}.

\medskip

However, these density results do not have immediate implications to the structure of $D_2$, the elements of $\mathbb{N}$ with unbounded trajectories, or $\mathbb{N} - D_2$, since $M_c$ and $L$ could contain elements which have unbounded trajectories as long as they have some lesser iterated image. 

\begin{prop}

Assume that $D_2 \neq \emptyset$. Then, 
\begin{itemize}
\item{1) There exists $y \in D_2 \cap M_c$ for any $0 < c \leq 1$.}
\item{2) Assume $D_2$ has density $0$, and that there are finitely many cycles $C_1,...,C_n$.  Let $f: \mathbb{N} \to \mathbb{R}$ satisfy $f(x) > \max_{1 \leq i \leq n}(\min(C_i))$ for all $x \in \mathbb{N}$.  Then $L_f := \{x \in \mathbb{N} \mid \exists k \in \mathbb{N}: T^k(x) < f(x)\}$ has density $1$. }
\item{2) $D_2$ can be written as a countable disjoint union of sets of the form $B_x := \bigcup_{n \in \mathbb{N}} \bigcup_{k \geq 0} T^{-n}(T^k(x))$ where each $x$ is in $D_2$.  Furthermore, each $B_x$ is invariant under $T$.}

\end{itemize}

\end{prop}
\begin{proof}

(1) Let $D_2$ be nonempty. By the well-ordering of the natural numbers, there exists some minimum element $d$. Consider that $2^{a}d \in D_2$ for any $a \in \mathbb{N}$ and in particular $T^{a}(2^{a}d) = d$. Fix some $c \in (0,1]$. Then, Take $a$ large enough such that $(2^{a}d)^{c} > d$, and then $T^{a}(2^{a}d) < (2^{a}d)^{c}$.
\bigskip

(2) Since $D_2$ has density $0$, $\mathbb{N} - D_2$ has density $1$.  So it suffices to show that $\mathbb{N} - D_2 \subset L_f$.  If $x$ has bounded trajectories, then by definition for some $j \in \mathbb{N}$ we have $T^j(x) \in C_i$ for some $i = 1,...,n$.  By taking more iterates, we have $T^k(x) = \min(C_i) < f(x)$.

\bigskip
(3) We clearly have $D_2 = \bigcup_{x \in D_2} B_x$ which is a countable union since $D_2 \subset \mathbb{N}$ is countable.  Since $D_2$ is invariant under $T$ and $y \in B_x$ implies $B_y \subset B_x$, we can discard some elements in the above union to yield a disjoint union.  By definition, $B_x$ is the smallest set that contains all preimages of all images of $x$ under $T$ so $B_x$ is invariant.  

\end{proof}


\medskip

\textbf{Remark 3:} With these assumptions, we would obtain stronger density results for the likes of functions $f$ previously analyzed.  In addition, while this set $D_2$ is not as well-understood as $L$, Theorem 1 shows that it has invariance under the forward and reverse Collatz map, $T(D_2) \subset D_2$ and $T^{-1}(D_2) \subset D_2$.  Similarly, if any number $x$ has bounded trajectories, then so does $T(x)$ and so do the values in the set $T^{-1}(x)$.  Such invariance properties are not immediate for $L$:  If $T^k(x) < x$ for some $x,k \in \mathbb{N}$ and $x$ is even, then it is not immediate that $T^k(x) < \frac{x}{2}$, or if any iterate is less than $\frac{x}{2}$.  If $x = 3p+2 \in \mathbb{N}_2$, then $T^k(x) < x$ does not imply $T^k(x) < 2p+1 \in T^{-1}(x)$.  This motivates the study of the structure of $D_2$, assuming it is non-empty.  

\bigskip
\bigskip

Part 3 of the above proposition gives a way to organize $D_2$ in relatively nice, invariant chunks $B_{x_i}$ which generate $D_2$. Following the family and chain structures introduced previously, the values in $\mathbb{N}_2$ and how they behave under $T^{-1}$ determine the structure of the sets $B_{x_i}$ and hence $D_2$. Given that the preimages of $3^{k}h-1$ expand under the action $\frac{2x-1}{3}$ exactly $k$ times before reaching a non-$\mathbb{N}_2$ node, we then focus on this preimage level.

\begin{defn}
We refer to the nodes in $\bigcup_{i=0}^{k} T^{-i}(3^{k}h-1)$ as the triangle generated by the node $3^{k}h-1$ and the nodes $n \in T^{-a}(3^{k}h-1)$ as the $a$-th level preimages.
\end{defn}

\medskip

For example, the triangle generated by $9h - 1$ is just the first $3$ levels of the tree generated by $9h-1$, i.e $\{9h-1\},\{6h-1, 18h-2\}, \{4h-1, 12h-2, 36h - 4\}$. By a quick computation, one can see that the only possible elements of the triangle which are not in $L$ are iterates of the $\frac{2x-1}{3}$ action (specifically, the nodes $9h-1, 6h-1, 4h-1$). For brevity, let us consider the two possible actions of the inverse map $T^{-1}$. Call the $\frac{2x-1}{3}$ the left action, and the action $2x$ the right action. Thus, the iterates of left actions are on the leftmost branch of the triangle (see Figure 1). This leads to the following claim:

\medskip

\textbf{Claim}: For all $x$ in a triangle generated by $3^k h - 1$, we can find $m \in \mathbb{N}$ such that $T^m(x) < x$, if $x$ does not lie on the left branch of the triangle.  

\medskip

This statement has been verified for several different values of $a,h \in \mathbb{N}$ and $h \notin \mathbb{N}_0$.  In particular, we have verified this statement for all possible combinations of $a \in \{1,2,...,20\}$ and $h \in \{1,2,...,300\} - \mathbb{N}_0$, through the code in the appendix.

 \medskip

The numerical data suggests that this claim is true, which would give a more precise characterization of the elements of $D_2$.    

\section{Code Appendix}

\begin{lstlisting}


def t(n):
  if n % 2 == 0:
    return n/2
  else:
    return (3*n + 1)/2

def t_iter(n,k):
  ls = [n]
  next = n
  for i in range(1,k+1):
    var1 = t(next)
    next = var1
    ls.append(var1)
  return ls
def inv_t(n):
  if n % 3 != 2:
    return [2*n]
  else:
    p = (n-2)/3
    return [2*p + 1, 2*n]

def flatten(l):
    return [item for sublist in l for item in sublist]

def generateTree(a,h):

  num = 4*(3**a * h - 1)
  flag = True

  tree = [[num]]
  current_lvl = [num]
  for i in range(1, a+1):
    next = flatten([inv_t(x) for x in current_lvl])
    tree.append(next)
    current_lvl = next
  return tree

def leftBranch(a,h):
  num = 3**a * h - 1
  return [3**(a-k) * (2**k * h) - 1 for k in range(0,a+1)]

###Run this method to check statement for different a,h values.  
def testTree(a,h): 
  tree = generateTree(a,h)
  branch = leftBranch(a,h)

  for k in range(2, len(tree)):
    lvl = tree[k]
    for node in lvl:
      t_image = t_iter(node, k)[1:]
      if (min(t_image) > node and node != lvl[0]):  
      #Checks if the node doesn't shrink and if it's not on the left hand branch
        return (False, node, k)
      

  return (True, None, None)
\end{lstlisting}

\bibliographystyle{plain}
\bibliography{Ref}{}
\bigskip
\bigskip
\bigskip

\end{document}